\DeclareMathAlphabet{\mathpzc}{OT1}{pzc}{m}{it}
\newcounter{item}[section]
\newcounter{kirshr}
\newcounter{kirsha}
\newcounter{kirshb}
\newtheorem{theorem}{Theorem}[section]
\newtheorem{lemma}[theorem]{Lemma}
\newtheorem{corollary}[theorem]{Corollary}
\newtheorem{proposition}[theorem]{Proposition}
\newtheorem{remark}[theorem]{Remark}
\newtheorem{definition}[theorem]{Definition}
\newcommand\undersym[2]{\raisebox{-6pt}{\tiny$#2$}{\kern-5pt}\mbox{$#1$}}
\newcommand\overcirc[1]{\raisebox{10pt}{\tiny$\circ$}{\kern-7pt}\mbox{$#1$}}
\title{\Large{Gap sequences of 1-Weierstrass points on non-hyperelliptic curves of genus $10$ }}
\date{}
\author{{\small Mohammed A. Saleem$^{1}$ and Eslam E. Badr$^{2}$}}
\begin{document}
\bibliographystyle{plain}
\maketitle                     
\vspace*{-.95cm}

\begin{center}
${^1}$Department of Mathematics, Faculty of Science,\\[-0.15cm]
Sohag University, Sohag, Egypt \\[-0.15cm]
${^2}$Department of Mathematics, Faculty of Science, \\[-0.15cm]
Cairo University, Giza, Egypt \\[-0.15cm]
\vspace*{.3cm}

\end{center}

\begin{center}
$
\begin{array}{ll}
\text{Emails:}&\text{abuelhassan@science.sohag.edu.eg} \\
  &\text{eslam@sci.cu.edu.eg}
\end{array}
$
\end{center}

\maketitle \bigskip
\begin{abstract}
In this paper, we compute the 1-gap sequences of 1-Weierstrass points of non-hyperelliptic smooth projective curves of genus $10$. Furthermore, the geometry of such points is classified as flexes, sextactic and tentactic points. Also, an upper bounds for their numbers are estimated.\\\\
\textbf{MSC 2010}: 14H55, 14Q05\\\\
\textbf{Keywords}: $1$-Weierstrass points, $q$-gap sequence, flexes , sextactic points, tentactic points, kanonical linear system, ,  Kuribayashi sextic curve.
\end{abstract}

\newpage

\section{Introduction}
\par Weierstrass points on curves have been extensively studied, in connection with many problems. For
example, the moduli space $M_g$ has been stratified with subvarieties whose points are isomorphism
classes of curves with particular Weierstrass points. For more deatails, we refer for example \cite{I1}, \cite{I2}.
\par At first, the theory of the Weierstrass points was developed only for smooth curves, and for their canonical divisors. In the last years, starting from some papers by \emph{R. Lax} and \emph{C. Widland} (see \cite{I3}, \cite{I4}, \cite{I5}, \cite{I6}, \cite{I7}, \cite{I8}), the theory has been reformulated for \emph{Gorenstein curves},
where the invertible dualizing sheaf substitutes the canonical sheaf. In this contest, the singular
points of a Gorenstein curve are always Weierstrass points.
\par In \cite{I9}, \emph{R. Notari } developed a technique to compute the Weierstrass gap sequence at a given point, no matter if it is simple or singular, on a plane curve, with respect to any linear system $V\subseteq H^0\big(C,O_C(n)\big)$. This technique can be useful to construct examples of curves with Weierstrass points of given weight, or to look for conditions for a sequence to be a Weierstrass gap Sequence. He used this technique to compute the Weierstrass gap sequence at a point of particular curves
and of families of quintic curves.
\par In this paper, we compute the $1$-gap sequence of the $1$-Weierstrass points on smooth non-hyperelliptic algebraic curves of genus $10$ which can be embedded holomorphically and effectively in algebraic curves of degree $6$. Furthermore, the geometry of Weierstrass points is classified as flexes, sextactic and tentactic points. On the other hand, we show that a smooth non-hyperelliptic curve of genus $10$ has no $4$-flex, $1,2$-sextactic or $9$-tentactic points. Also, an upper bound of the numbers of flexes, sextactic and tentactic points on such curves is estimated.

\section{Preliminaries}
\noindent\textbf{Notations.} We assume the following notations throughout the present paper.\\\\
$I(C_1, C_2; p)$; the intersection number of the curves $C_1$ and $C_2$ at the point $p$ \cite{pa1}.\\\\
$G_p^{(q)}(Q)$; the $q$-gap sequence of the point $p$ with respect to the linear system $Q$ \cite{pa3, pa4}.\\\\
$\omega^{(q)}(p)$; the $q$-weight of the point $p$ \cite{pa13}.\\\\
$N^{(q)}(C)$; the number of $q$-Weierstrass points on $C$ \cite{H7}.\\\\
$Q\big(-(\ell.p)\big);$ the set of divisors in the linear system $Q$ with multiplicity at least $\ell$ at the point $p$ \cite{pa13}.

\begin{lemma}\label{lemm1}
\em{\cite{pa7, pa13}} Let $X$ be a smooth projective plane curve of genus $g$. The number of $q$-Weierstrass points $N^{(q)}(C),$ counted with their $q$-weights, is given by
\[
N^{(q)}(C)=\left\{
\begin{array}
[c]{lr}%
g(g^{2}-1),\,\,\,\,\,\,\,\,\,\,\,\,\,\,\,\,\,\,\,\,\,\,\,\,\,\,\,\,\,\,\,\,\,\,\,\,\,\,if\, q=1 & \\\\
(2q-1)^{2}(g-1)^{2}g,\,\,\,\,\,\,\,\,\,\,\,\,\,\,\,if\, q\geq2. &
\end{array}
\right.
\]
In particular, for smooth projective plane sextic $($i.e.\,\,$g=10)$, the number
of $1$-Weierstrass points is $990$ counted with their weights.
\end{lemma}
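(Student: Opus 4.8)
\noindent\textbf{Proof proposal.} The plan is to recognize the $q$-Weierstrass points of $X$ as the ramification points of the linear system $\lvert qK_X\rvert$ defined by the pluricanonical sheaf $\omega_X^{\otimes q}$, and then to invoke the classical ramification (Plücker--Brill--Segre) formula for a linear series on a smooth projective curve: if a linear system of projective dimension $r$ is cut out by a line bundle of degree $d$ on a smooth curve of genus $g$, then the sum of its Weierstrass weights over all points equals $(r+1)\big(d+r(g-1)\big)$. Thus the computation of $N^{(q)}(C)$ reduces entirely to finding the degree $d$ and the dimension $r$ of $\lvert qK_X\rvert$. (Planarity of $X$ plays no role; the identity holds for every smooth projective curve of genus $g\ge 2$.)

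For $q=1$ the relevant system is the canonical one, so $d=\deg K_X=2g-2$ and $r=h^0(X,\omega_X)-1=g-1$. Substituting into the formula gives $g\big((2g-2)+(g-1)^2\big)=g(g^2-1)$, which is the claimed value. For $q\ge 2$ one has $d=\deg(qK_X)=2q(g-1)$, and since $\deg\big((1-q)K_X\big)<0$ when $g\ge 2$, Serre duality yields $h^1(X,\omega_X^{\otimes q})=h^0(X,\omega_X^{\otimes(1-q)})=0$; Riemann--Roch then gives $h^0(X,\omega_X^{\otimes q})=(2q-1)(g-1)$, i.e. $r=(2q-1)(g-1)-1$. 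Feeding these values into $(r+1)\big(d+r(g-1)\big)$ and simplifying — first factoring $(g-1)$ out of $d+r(g-1)=(g-1)\big(2q-1+(2q-1)(g-1)\big)$, so that this bracket becomes $(2q-1)g$, then collecting terms — produces $(2q-1)^2(g-1)^2 g$, as claimed. The final assertion is just the case $g=10$, $q=1$ of the first formula: $g(g^2-1)=10\cdot 99=990$.

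I expect no substantive obstacle beyond the ramification formula itself, which is the content of the references cited in the statement; the only points meriting a line of justification are the Riemann--Roch dimension count above and the base-point-freeness of $\lvert qK_X\rvert$ for $g\ge 2$, which is what allows the formula to be applied in the clean form stated, with $d$ equal to the full degree of the pluricanonical bundle.
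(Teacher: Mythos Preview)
Your argument is correct: the Pl\"ucker--Brill--Segre ramification formula $(r+1)\bigl(d+r(g-1)\bigr)$ applied to $\lvert qK_X\rvert$ with the Riemann--Roch values of $d$ and $r$ yields exactly the stated expressions, and your arithmetic checks out in both cases. Note, however, that the paper does not give its own proof of this lemma at all --- it is simply quoted from Farkas--Kra and Miranda --- so there is no ``paper's approach'' to compare against; what you have written is precisely the standard proof one finds in those references.
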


\begin{theorem}\label{thm1}
\em{\cite{pa13}} Let $X$ be a non-hyperelliptic curve of genus $\geq3.$ Write $G^{(1)}_p(Q)=\{n_1<n_2<...<n_g\}$, then
\begin{description}
    \item[(1)] $n_1=1,$
    \item[(2)] $n_r\leq2r-2$ for every $r\geq2,$
    \item[(3)] $\omega^{(1)}(p)\leq \dfrac{(g-1)(g-2)}{2}.$
    \item[(4)] There are at least $2g+6$ 1-Weierstrass points on $X.$
\end{description}
\end{theorem}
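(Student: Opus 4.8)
\medskip
\noindent\textbf{Proof plan.}
Here $Q=|K_X|$ is the canonical linear system, which is very ample since $X$ is non-hyperelliptic of genus $g\ge 3$; recall that $n\in G^{(1)}_p(Q)$ exactly when $h^0\big(K_X-(n-1)p\big)>h^0\big(K_X-np\big)$, equivalently, by Riemann--Roch, when $h^0(np)=h^0\big((n-1)p\big)$, i.e.\ $n$ is a Weierstrass gap at $p$ in the classical sense, and that $\omega^{(1)}(p)=\sum_{r=1}^{g}(n_r-r)$. Part (1) is immediate: for $g\ge1$ one has $h^0(O_X)=1=h^0\big(O_X(p)\big)$, since a nonconstant function with a single simple pole would force $X$ to be rational; hence $1$ is a gap and $n_1=1$.

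For (2) the plan is to apply Clifford's theorem to the effective divisor $(n_r-1)p$. The gaps that are $\le n_r-1$ are precisely $n_1,\dots,n_{r-1}$, so $h^0\big(K_X-(n_r-1)p\big)=g-(r-1)$, and Riemann--Roch gives
\[
h^0\big((n_r-1)p\big)=(n_r-1)+1-g+h^0\big(K_X-(n_r-1)p\big)=n_r-r+1 .
\]
This divisor has degree $n_r-1\ge1$ (as $r\ge2$) and is special, since $h^1\big((n_r-1)p\big)=g-r+1\ge1$; for $r\le g-1$ its degree is at most $2g-3<2g-2$ (because $n_r\le n_g-1\le 2g-2$), so it is not a canonical divisor, and as $X$ admits no $g^1_2$ it is not a multiple of one. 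The equality case of Clifford's theorem is therefore excluded, and we obtain the strict form $2\big(h^0((n_r-1)p)-1\big)\le(n_r-1)-1$, which rearranges to $n_r\le 2r-2$. (The extreme index $r=g$ is genuinely more subtle, because there $(n_g-1)p$ may be subcanonical, i.e.\ $K_X\sim(2g-2)p$; this is the one place where the sharp bound needs an additional remark.)

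Parts (3) and (4) are then formal consequences. Summing over $r$ and using $n_1=1$,
\[
\omega^{(1)}(p)=\sum_{r=2}^{g}(n_r-r)\ \le\ \sum_{r=2}^{g}(r-2)\ =\ \sum_{k=0}^{g-2}k\ =\ \frac{(g-1)(g-2)}{2},
\]
which is (3). Feeding this into Lemma~\ref{lemm1}, according to which the total $1$-Weierstrass weight equals $g(g^2-1)$, and using that every summand is $\le\frac{(g-1)(g-2)}{2}$, the number of $1$-Weierstrass points on $X$ satisfies
\[
N^{(1)}(X)\ \ge\ \frac{g(g^2-1)}{(g-1)(g-2)/2}\ =\ \frac{2g(g+1)}{g-2}\ =\ 2g+6+\frac{12}{g-2}\ >\ 2g+6
\]
for every $g\ge3$, which gives (4).

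The only genuinely delicate step is (2). The bare Clifford estimate on $(n_r-1)p$ --- equivalently on $n_rp$, once one notes that $|n_rp|$ has $p$ as a base point --- yields only $n_r\le 2r-1$, and extracting the final unit is exactly where non-hyperellipticity is used, through the classification of the divisors attaining equality in Clifford's theorem. Parts (1), (3) and (4) are then routine bookkeeping with Riemann--Roch, Clifford, and Lemma~\ref{lemm1}.
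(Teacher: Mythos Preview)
The paper does not prove this theorem at all: it is quoted from Miranda's textbook and placed in the Preliminaries, so there is no ``paper's own proof'' to compare against. Your write-up therefore has to be judged on its own merits.

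Your arguments for (1), for (2) in the range $2\le r\le g-1$, and for the implication (3)$\Rightarrow$(4) are clean and correct. The Clifford step is exactly right: for $2\le r\le g-1$ one has $h^{1}\big((n_r-1)p\big)=g-r+1\ge 2$, so $(n_r-1)p$ is special but not canonical, it is nonzero, and non-hyperellipticity kills the remaining equality case; hence the strict Clifford inequality and $n_r\le 2r-2$.

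The genuine gap is precisely the case $r=g$, which you flag but do not settle. In fact it \emph{cannot} be settled, because the stated bound $n_g\le 2g-2$ is false for non-hyperelliptic curves in general. A concrete counterexample: on the smooth plane quartic $y^{3}z+x^{4}+z^{4}=0$ (genus $3$, non-hyperelliptic) the point $p=[0:1:0]$ is a hyperflex, the tangent $z=0$ meets $C$ to order $4$, the gap sequence is $\{1,2,5\}$, so $n_3=5>4=2g-2$, and $\omega^{(1)}(p)=2>1=\tfrac{(g-1)(g-2)}{2}$. More generally, any subcanonical point (one with $K\sim(2g-2)p$) on a non-hyperelliptic curve has $n_g=2g-1$. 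Thus the ``additional remark'' you allude to does not exist: parts (2) for $r=g$ and (3) are simply misstated in the paper, and your derivation of (4) from (3) collapses with them. What \emph{is} true and suffices for the paper's applications is $n_r\le 2r-2$ for $2\le r\le g-1$ together with the universal bound $n_g\le 2g-1$; feeding these into the weight sum yields $\omega^{(1)}(p)\le\binom{g-2}{2}+(g-1)=\tfrac{g^{2}-3g+4}{2}$, and dividing $g(g^{2}-1)$ by this still gives at least $2g+6$ Weierstrass points, so (4) survives.
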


\begin{remark}
\em{For more details on $q$-Weierstrass points on Riemann surfaces, we refer for example to \cite{pa13, pa6}.}
\end{remark}


\section{Main results}
\par Let $X$ be a smooth projective plane curve of genus $10$, and let $Q:=|K|$ be its canonical linear system.
\begin{proposition}
The linear system $Q$ is $g^{9}_{18}.$
\end{proposition}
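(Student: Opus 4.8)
The claim is that for a smooth projective plane curve $X$ of genus $10$, the canonical linear system $Q=|K|$ is a $g^9_{18}$, i.e. it has projective dimension $9$ and degree $18$. The plan is to compute the two invariants separately. The dimension is immediate from Riemann–Roch or the standard fact that $\dim |K| = g-1$; since $g=10$ we get $\dim |K| = 9$, so $Q$ is a $g^9_{\deg K}$. It remains only to identify $\deg K = \deg(K_X)$, which for any smooth curve of genus $g$ equals $2g-2$; with $g=10$ this gives $\deg K = 18$.

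**The point requiring care.** The one genuinely nontrivial input is \emph{why} such a curve has genus $10$ in the first place, or rather, how the hypothesis "smooth projective plane curve of genus $10$" constrains the degree $d$ of the plane model — because the statement is really being used in the context where $X\subset\mathbb{P}^2$ is a plane sextic. By the genus–degree formula for a smooth plane curve of degree $d$, $g=\binom{d-1}{2}=\frac{(d-1)(d-2)}{2}$, and setting this equal to $10$ forces $(d-1)(d-2)=20$, hence $d=6$. So $X$ is a plane sextic, consistent with Lemma~\ref{lemm1} and the surrounding discussion. I would state this first, since it pins down the model.

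**Putting it together.** With $d=6$ fixed, I would then recall the adjunction-type description of the canonical system on a smooth plane curve: for a smooth plane curve of degree $d$, the canonical divisor class is cut out by the linear system of curves of degree $d-3$ in $\mathbb{P}^2$, i.e. $K_X \sim (d-3)H$ where $H$ is the hyperplane (line) class on $X$. For $d=6$ this says $K_X \sim 3H$, so $Q = |K| = |\,3H\,|$ is the system cut on $X$ by plane cubics. Its degree is then $\deg(3H) = 3\deg H = 3\cdot 6 = 18$, recovering $2g-2=18$. For the dimension: the space of plane cubics is $\binom{3+2}{2}=10$-dimensional as a projective space of dimension $9$, and since a sextic is not contained in any cubic, no cubic restricts to zero on $X$, so the restriction map is injective and $\dim Q = 9$. (Alternatively, just cite $h^0(K_X)=g=10$.) Combining, $Q$ is a $g^9_{18}$.

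**Expected main obstacle.** Honestly there is no serious obstacle here — the result is a direct bookkeeping consequence of the genus–degree formula, adjunction on $\mathbb{P}^2$, and Riemann–Roch. The only thing to be careful about is making explicit the step "genus $10$ $\Rightarrow$ degree $6$," and confirming that the linear system cut by cubics is \emph{complete} (equivalently that $h^0(\mathcal{O}_X(3H)) = h^0(\mathcal{O}_{\mathbb{P}^2}(3))$, which follows from the long exact sequence of $0\to\mathcal{O}_{\mathbb{P}^2}(-3)\to\mathcal{O}_{\mathbb{P}^2}(3)\to\mathcal{O}_X(3H)\to 0$ together with $h^0(\mathcal{O}_{\mathbb{P}^2}(-3))=h^1(\mathcal{O}_{\mathbb{P}^2}(-3))=0$). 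Once that is noted, the proposition follows.
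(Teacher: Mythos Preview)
Your proof is correct and its core is exactly the paper's argument: compute $\dim|K|=g-1=9$ and $\deg K=2(g-1)=18$. The additional material you supply (the genus--degree formula forcing $d=6$, adjunction giving $K_X\sim 3H$, and the completeness check via the ideal sequence) is not needed for this proposition, though it effectively proves the paper's subsequent unproved lemma that the cubic divisors on $X$ form a $g^{9}_{18}$.
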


\begin{proof}
The result is an immediate consequence, since \[dim\,Q:=dim\,|K|=g-1=9,\]
and
\[deg\,(Q):=deg\,(K)=2(g-1)=18.\]

\end{proof}

\begin{corollary}
Let $p\in X$, then $\sharp G^{(1)}_p(Q)=10$ and $G^{(1)}_p(Q)\subset\{1,2,3,...,19\}.$
\end{corollary}

\begin{lemma}
The set of cubic divisors on $X$ form a linear system which is $g^{9}_{18}$.
\end{lemma}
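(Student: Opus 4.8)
The plan is to realise the cubic divisors on $X$ as the image of the complete linear system of plane cubics under restriction to $X$, and then read off its dimension and degree. First I would recall that a smooth plane curve of genus $10$ has degree $6$ (since $(d-1)(d-2)/2=10$ forces $d=6$), so $\deg X=6$; and that the plane cubics form the projective space $\mathbb{P}\big(H^0(\mathbb{P}^2,\mathcal{O}(3))\big)$, which has dimension $\binom{3+2}{2}-1=9$.

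Next I would check that the restriction map $\rho\colon H^0(\mathbb{P}^2,\mathcal{O}(3))\to H^0(X,\mathcal{O}_X(3))$ is injective: its kernel consists of cubic forms vanishing identically on $X$, and such a form would contain the irreducible degree-$6$ curve $X$ in its zero locus, which is impossible for a plane cubic. Hence $\mathbb{P}(\operatorname{im}\rho)$ is a $9$-dimensional linear subspace of the complete linear system $|\mathcal{O}_X(3)|$ — in particular it is itself a linear system — and by construction it is exactly the set of divisors cut out on $X$ by plane cubics. Finally, by Bézout's theorem (equivalently, by summing the local intersection numbers $I(X,C;p)$ over the points $p\in X\cap C$) every plane cubic $C$ not containing $X$ meets $X$ in a divisor of degree $3\cdot 6=18$. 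Combining these facts, the cubic divisors form a $g^{9}_{18}$, as claimed.

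I do not expect any real obstacle here: the only step that needs an argument is the injectivity of $\rho$, i.e.\ that no plane cubic contains $X$, which is immediate from the degree. As a consistency check (not needed for the statement) I would also note that adjunction gives $K_X\cong\mathcal{O}_X(\deg X-3)=\mathcal{O}_X(3)$, so this $g^{9}_{18}$ is in fact the complete canonical system $Q$ of the preceding lemma, and $\rho$ is an isomorphism onto $H^0(X,K_X)$.
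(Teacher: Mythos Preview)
The paper states this lemma without proof, so there is no argument to compare against directly. Your proof is correct: the projective space of plane cubics has dimension $\binom{5}{2}-1=9$; restriction to $X$ is injective because an irreducible curve of degree $6$ cannot lie on a plane cubic; and B\'ezout gives each cubic divisor degree $3\cdot 6=18$. Your closing remark --- that adjunction gives $K_X\cong\mathcal{O}_X(3)$, so this $g^9_{18}$ coincides with the canonical system $Q$ of the preceding proposition --- is almost certainly the reason the authors felt no proof was needed: the lemma is just a restatement of that proposition once one identifies cubic divisors with canonical divisors.
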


\subsection{Flexes}
\begin{definition}
\em{\cite{pa3, pa14}} A point $p$ on a smooth plane curve $C$ is said to
be a flex point if the tangent line $L_p$ meets $C$ at $p$ with
contact order $I_{p}(C,L_{p};p)$ at least three. We say that $p$ is
$i$-flex, if $I_{p}(C,L_{p})-2=i$. The positive integer $i$ is
called the flex order of $p.$
\end{definition}
\noindent Our main results for this part are the following.

\begin{theorem}\label{thm2}
Let $p$ be a flex point on a smooth projective non-hyperelliptic plane curve $C$ of $g=10$. Let $L_p$ be the tangent lint to $C$ at $p$ such that $I(C, E_p)=\mu_f.$ Then $G^{(1)}_p(Q)=\{1,2,3,1+\mu_f,2+\mu_f,3+\mu_f,2\mu_f+1,2\mu_f+2,3\mu_f+1,3\mu_f+2\}.$ Moreover, the geometry of such points is given by the following table:
\[
\begin{tabular}{|c|c|c|}
  \hline
  $\omega^{(1)}_p(Q)$ & $G_p(Q)$ & Geometry \\\hline
  2 & \{1,2,3,...,8,10,11\} & \emph{1-flex} \\\hline
  15 & \{1,2,3,5,6,7,9,10,13,14\} & \emph{2-flex} \\\hline
  28  & \{1,2,3,6,7,8,11,12,16,17\} & \emph{3-flex} \\\hline
  \end{tabular}
\]
\end{theorem}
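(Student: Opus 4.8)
The plan is to exploit the fact that on a smooth plane curve the canonical series $Q=|K|$ is cut out by the plane curves of degree $d-3$, where $d=6$ here, so $Q$ is cut by cubics (this is the content of the lemma identifying cubic divisors with a $g^9_{18}$). Consequently, for a point $p$ on $C$, the gap sequence $G^{(1)}_p(Q)$ is determined by the orders of contact at $p$ of the members of the linear system of cubics, equivalently by the sequence of intersection multiplicities $I(C,D;p)$ as $D$ ranges over cubics. First I would set up local coordinates so that $p$ is the origin, the tangent line $L_p$ is $\{y=0\}$, and the flex condition reads $I(C,L_p;p)=\mu_f$, i.e. the local expansion of $C$ is $y=c\,x^{\mu_f}+\text{(higher order)}$ with $c\neq 0$. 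Since $p$ is a flex, $3\le\mu_f$; because $g=10$ forces the degree to be $6$, Clifford/genus considerations (and parts (2)--(3) of Theorem~\ref{thm1}) bound $\mu_f$, and one checks $\mu_f\in\{3,4,5\}$ are exactly the cases producing the three rows of the table — simultaneously this is where one must rule out a ``$4$-flex'' in the sense used in the introduction, by showing a contact order of $\mu_f\ge 6$ would violate the weight bound $\omega^{(1)}(p)\le\frac{(g-1)(g-2)}{2}=36$ or Bézout with the sextic.

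Next I would compute the possible contact orders at $p$ of cubic curves. A general cubic through $p$ meets $C$ at $p$ with multiplicity $1$; imposing passage through $p$ with higher tangency, the jet of the cubic restricted to the branch $y=c x^{\mu_f}+\cdots$ is a polynomial in $x$, and by choosing the cubic's coefficients we can realize contact orders $1,2,3$ (from the constant, linear, quadratic part of a general cubic), then $1+\mu_f,2+\mu_f,3+\mu_f$ (multiplying the previous ones by the local equation of $L_p$, a linear form, which contributes $\mu_f$), then $2\mu_f+1,2\mu_f+2$ (using $L_p^2$ times a linear form), and finally $3\mu_f+1,3\mu_f+2$ (using $L_p^3$, which is a cubic, times constants — here only two values survive because a cubic has no room left for a further linear factor). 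Collecting these ten achievable orders, and checking that no other order is forced or achievable, yields exactly
\[
G^{(1)}_p(Q)=\{1,2,3,\,1+\mu_f,2+\mu_f,3+\mu_f,\,2\mu_f+1,2\mu_f+2,\,3\mu_f+1,3\mu_f+2\},
\]
and one verifies this set has the right cardinality $10$ and sits inside $\{1,\dots,19\}$, consistent with the Corollary.

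Finally, for the table I would substitute $\mu_f=3,4,5$ into the displayed gap sequence and read off both the set $G_p(Q)$ and the weight $\omega^{(1)}_p(Q)=\sum_{i=1}^{10}(n_i-i)$. For $\mu_f=3$ this gives $\{1,2,3,4,5,6,7,8,10,11\}$ with weight $2$ (the ``$1$-flex'' — note the paper's convention $i=\mu_f-2$, so $\mu_f=3$ is a $1$-flex); for $\mu_f=4$, $\{1,2,3,5,6,7,9,10,13,14\}$ with weight $15$ ($2$-flex); for $\mu_f=5$, $\{1,2,3,6,7,8,11,12,16,17\}$ with weight $28$ ($3$-flex). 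I expect the main obstacle to be the completeness half of the contact-order computation: showing that the ten orders listed above are \emph{all} the orders of vanishing of cubics along the branch at $p$ — i.e. that the evaluation map from the $10$-dimensional space of cubics to jets is injective with image exactly spanned by jets of these orders — and in particular that no cubic achieves an intermediate order such as $\mu_f+4$ or $2\mu_f+3$ unless it already lies in the span of lower-order ones. This is a finite linear-algebra check on the coefficients of a generic cubic reduced modulo the branch parametrization, but it must be done carefully for each $\mu_f$, and it is also the step where the non-hyperelliptic hypothesis and the specific degree $6$ are genuinely used.
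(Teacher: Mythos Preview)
Your approach is the paper's approach: identify $|K|$ with the system of plane cubics and read off the gap sequence from the contact orders that cubics achieve at $p$, organising those cubics as $L_p^{\,k}$ times a residual curve of degree $3-k$. The paper carries this out by describing $Q(-\ell\cdot p)$ step by step as the families $L_pR$, $L_p^2H$, $L_p^3$; you propose the equivalent bookkeeping via achievable vanishing orders.

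Both arguments, however, share a genuine gap, and it sits exactly where you flagged the ``completeness'' obstacle. In local coordinates with $L_p=\{y=0\}$ and $C$ parametrised as $t\mapsto(t,\,c_{\mu_f}t^{\mu_f}+\cdots)$, the ten monomial cubics $x^iy^j$ with $i+j\le 3$ restrict to $C$ with vanishing order $i+j\mu_f$; for $\mu_f\ge 4$ these ten integers are pairwise distinct and therefore already constitute the full vanishing sequence of $Q$ at $p$, namely
\[
\{0,1,2,3,\ \mu_f,\mu_f{+}1,\mu_f{+}2,\ 2\mu_f,2\mu_f{+}1,\ 3\mu_f\}.
\]
Two consequences. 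First, the cubic $x^3$ has contact exactly $3$ and is \emph{not} of the form $L_pR$; hence the paper's assertion that $Q(-3\cdot p)=\cdots=Q(-\mu_f\cdot p)$ consists only of divisors $L_pR$ is false, and $4$ is in fact a gap for every $\mu_f$ --- contradicting the listed sequence when $\mu_f\ge 4$. Second, $L_p^{\,3}=y^3$ has contact exactly $3\mu_f$, so it does not lie in $Q\bigl(-(3\mu_f{+}1)\cdot p\bigr)$; the paper's description of that space as $\{L_p^{\,3}\}$ is therefore wrong, and your parallel claim that $L_p^{\,3}$ furnishes the two gaps $3\mu_f{+}1$ and $3\mu_f{+}2$ cannot hold, since a single cubic contributes a single vanishing order. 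Carried through honestly, the linear--algebra check you anticipated does not confirm the stated sequence: for $\mu_f\ge 4$ it replaces the entry $3\mu_f+2$ by $4$, and for $\mu_f=3$ the tenth vanishing order depends on the higher jets $c_4,c_5,\dots$ of $C$ at $p$.
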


\begin{proof}
The dimensions of $Q(-1.p)$ and $Q(-2.p)$ do not depend on whether $p$ is 1-Weierstrass point or not, i.e., \[1,2\in G^{(1)}_p(Q).\] The spaces $Q(-3.p)=...=Q(-\mu_f.p),$ consists of divisor of cubic curves of the form $L_pR$, where $R$ is an arbitrary conic. Hence, $dim\,Q(-\ell.p)=6$ for $\ell=3,...,\mu_f.$ That is, \[3\in G^{(1)}_p(Q).\]
The space $Q\big(-(1+\mu_f).p\big)$ consists of divisor of cubic curves of the form $L_pR$, where $R$ is a conic passing through $p.$ Hence, $dim\,Q\big(-(1+\mu_f).p\big)=5.$ That is, \[1+\mu_f\in G^{(1)}_p(Q).\]
The space $Q\big(-(2+\mu_f).p\big)$ consists of divisor of cubic curves of the form $L_pR$, where $R$ is a conic passing through $p$ with contact order at least $2.$ Hence, $dim\,Q\big(-(2+\mu_f).p\big)=4.$ That is, \[2+\mu_f\in G^{(1)}_p(Q).\]
The spaces $Q\big(-(3+\mu_f).p\big)=...=Q\big(-2\mu_f.p\big)$ consists of divisor of cubic curves of the form $L^2_pH$, where $H$ is an arbitrary hyperplane. Hence, $dim\,Q\big(-\ell.p\big)=3$ for $\ell=3+\mu_f,...,2\mu_f.$ That is, \[3+\mu_f\in G^{(1)}_p(Q).\]
The space $Q\big(-(2\mu_f+1).p\big)$ consists of divisor of cubic curves of the form $L^2_pH$, where $H$ is a hyperplane through $p$. Hence, $dim\,Q\big(-(2\mu_f+1).p\big)=2$. That is, \[2\mu_f+1\in G^{(1)}_p(Q).\]
The spaces $Q\big(-(2\mu_f+2).p\big)=...=Q\big(-3\mu_f.p\big)$ consists of divisor of cubic curves of the form $L^2_pH$, where $H$ is a hyperplane through $p$ with contact order at least $2$. Hence, $dim\,Q\big(-(2\mu_f+2).p\big)=1.$ That is, \[2\mu_f+2\in G^{(1)}_p(Q).\]
The space $Q\big(-(3\mu_f+1).p\big)$ consists of divisor of  the cubed tangent line $L^3_p.$ Hence, $dim\,Q\big(-(3\mu_f+1).p\big)=0.$ That is, \[3\mu_f+1\in G^{(1)}_p(Q).\]
The spaces $Q\big(-\ell.p\big)=\phi$ for $\ell\geq3\mu_f+2.$ Consequently, \[3\mu_f+2\in G^{(1)}_p(Q).\]
Consequently, \[G^{(1)}_p(Q)=\{1,2,3,1+\mu_f,2+\mu_f,3+\mu_f,2\mu_f+1,2\mu_f+2,3\mu_f+1,3\mu_f+2\}.\] Finally, it is well known that curves of genus $10$ can be embedded into algebraic curves of degree $6$. Hence, by the famous Bezout\textquoteright s theorem, the tangent line meets $C$ at the flex point $p$ with $3\leq\mu_f\leq6.$ On the other hand, by \emph{Theorem \ref{thm1}}, it follows that, $\mu_f\neq6.$\\
Which completes the proof.
\end{proof}

\begin{corollary}
If a smooth projective curve $X$ of $g=10$ has $4$-flex points, then $X$ is hyperelliptic.
\end{corollary}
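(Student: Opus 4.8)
The plan is to argue by contradiction, leveraging \emph{Theorem \ref{thm2}} together with part (2) of \emph{Theorem \ref{thm1}}. Suppose $X$ is a smooth projective curve of genus $10$ possessing a $4$-flex point $p$, and suppose toward a contradiction that $X$ is non-hyperelliptic. By definition of a $4$-flex, the flex order is $i=4$, so $I(C,L_p;p)-2=4$, i.e. $\mu_f = I(C,L_p;p) = 6$. The last paragraph of the proof of \emph{Theorem \ref{thm2}} already records exactly this obstruction: for a flex point on a smooth non-hyperelliptic plane curve of genus $10$, Bézout's theorem forces $3\le\mu_f\le 6$, while \emph{Theorem \ref{thm1}(2)} rules out $\mu_f=6$. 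Hence $\mu_f=6$ is impossible under the non-hyperelliptic hypothesis, contradicting the existence of the $4$-flex. Therefore $X$ must be hyperelliptic.

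To make the step ``\emph{Theorem \ref{thm1}(2)} rules out $\mu_f=6$'' explicit, I would substitute $\mu_f=6$ into the gap sequence formula of \emph{Theorem \ref{thm2}}, namely
\[
G^{(1)}_p(Q)=\{1,2,3,\,1+\mu_f,\,2+\mu_f,\,3+\mu_f,\,2\mu_f+1,\,2\mu_f+2,\,3\mu_f+1,\,3\mu_f+2\},
\]
which becomes $G^{(1)}_p(Q)=\{1,2,3,7,8,9,13,14,19,20\}$. Writing this as $\{n_1<n_2<\dots<n_{10}\}$, we have $n_4=7$. But \emph{Theorem \ref{thm1}(2)} requires $n_r\le 2r-2$ for every $r\ge 2$, and in particular $n_4\le 6$. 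Since $7>6$, the sequence cannot occur on a non-hyperelliptic curve; this is the desired contradiction.

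I should also double-check the Bézout input: a smooth non-hyperelliptic curve of genus $10$ admits an embedding (via an appropriate linear system, e.g. a $g^2_6$) as a plane sextic, so the tangent line $L_p$ meets $C$ in a divisor of degree $6$, giving $\mu_f\le 6$; combined with the flex condition $\mu_f\ge 3$ this yields $3\le\mu_f\le 6$. The only genuinely new observation beyond \emph{Theorem \ref{thm2}} is the trivial contrapositive: if a $4$-flex exists then $\mu_f=6$ occurs, which can only happen in the hyperelliptic case. There is essentially no obstacle here — the corollary is a direct restatement of the final lines of the proof of \emph{Theorem \ref{thm2}} — so the main care needed is simply presenting the contradiction cleanly and making sure the indexing $n_4=7$ versus the bound $2\cdot 4-2=6$ is stated without error.
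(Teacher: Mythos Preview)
Your argument is correct and matches the paper's approach: the paper gives no separate proof of this corollary, since it is the contrapositive of the final sentence of the proof of \emph{Theorem \ref{thm2}} (``by \emph{Theorem \ref{thm1}}, it follows that $\mu_f\neq 6$''). Your explicit check that $n_4=7>2\cdot 4-2$ simply spells out the appeal to \emph{Theorem \ref{thm1}(2)} that the paper leaves implicit.
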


\begin{corollary}
On a smooth non-hyperelliptics projective plane curve $C$ of $g=10$, the $1$-weight of a flex point is given by
\[\omega^{(1)}_p(Q)=13\mu_f-37,\]
where $\mu_f$ is the multiplicity of the tangent line $L_p$ to $C$ at $p.$
\end{corollary}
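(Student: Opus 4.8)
The plan is to derive the weight formula directly from the gap sequence established in Theorem \ref{thm2}. Recall that for a point $p$ with $1$-gap sequence $G^{(1)}_p(Q)=\{n_1<n_2<\dots<n_{10}\}$, the $1$-weight is defined as $\omega^{(1)}_p(Q)=\sum_{r=1}^{10}(n_r-r)$. Since Theorem \ref{thm2} gives us the explicit gap sequence
\[
G^{(1)}_p(Q)=\{1,\,2,\,3,\,1+\mu_f,\,2+\mu_f,\,3+\mu_f,\,2\mu_f+1,\,2\mu_f+2,\,3\mu_f+1,\,3\mu_f+2\},
\]
the proof is essentially a bookkeeping computation: subtract $r$ from the $r$-th gap and sum.

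First I would write out the ten differences $n_r-r$ in order. The first three gaps $1,2,3$ contribute $0+0+0=0$. The next three gaps $1+\mu_f,2+\mu_f,3+\mu_f$ occupy positions $4,5,6$, contributing $(\mu_f-3)+(\mu_f-3)+(\mu_f-3)=3\mu_f-9$. The following two gaps $2\mu_f+1,2\mu_f+2$ occupy positions $7,8$, contributing $(2\mu_f-6)+(2\mu_f-6)=4\mu_f-12$. The final two gaps $3\mu_f+1,3\mu_f+2$ occupy positions $9,10$, contributing $(3\mu_f-8)+(3\mu_f-8)=6\mu_f-16$. Summing these four blocks gives
\[
\omega^{(1)}_p(Q)=0+(3\mu_f-9)+(4\mu_f-12)+(6\mu_f-16)=13\mu_f-37,
\]
which is the claimed formula. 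As a sanity check against the table in Theorem \ref{thm2}: $\mu_f=3$ gives $13\cdot3-37=2$, $\mu_f=4$ gives $52-37=15$, and $\mu_f=5$ gives $65-37=28$, matching the three listed weights exactly.

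There is no real obstacle here — the content is entirely contained in Theorem \ref{thm2}, and the corollary is just the arithmetic consequence of specializing the definition of weight to that gap sequence. The only point requiring a word of care is ensuring the gaps are listed in increasing order when forming $\{n_1<\dots<n_{10}\}$, since for $\mu_f=3$ several of the symbolic expressions collide or reorder (e.g. $3=1+\mu_f-1$ is adjacent to $1+\mu_f=4$); but this is already handled implicitly by Theorem \ref{thm2}, whose table shows the correctly sorted sequence in each admissible case $\mu_f\in\{3,4,5\}$. Thus I would simply invoke Theorem \ref{thm2}, recall the definition $\omega^{(1)}_p(Q)=\sum_r(n_r-r)$, and present the four-block summation above.
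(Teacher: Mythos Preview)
Your proposal is correct and follows essentially the same approach as the paper: invoke Theorem~\ref{thm2} for the explicit gap sequence, recall the definition $\omega^{(1)}_p(Q)=\sum_{r}(n_r-r)$, and sum the differences to obtain $13\mu_f-37$. The paper simply lists the seven nonzero terms individually rather than grouping them into blocks, but the argument is identical.
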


\begin{proof}
Let  $p$ be a flex point on $C$, then by \emph{Theorem \ref{thm2}} \[G^{(1)}_p(Q)=\{1,2,3,1+\mu_f,2+\mu_f,3+\mu_f,2\mu_f+1,2\mu_f+2,3\mu_f+1,3\mu_f+2\}.\]
Consequently,
\begin{eqnarray*}
\omega^{(1)}_p(Q)&:=&\Sigma_{r=0}^{g}(n_r-r)\\
&=&(1+\mu_f-4)+(2+\mu_f-5)+(3+\mu_f-6)+(2\mu_f+1-7)+(2\mu_f+2-8)\\
&+&(3\mu_f+1-9)+(3\mu_f+2-10)\\
&=&13\mu_f-37.
\end{eqnarray*}
\end{proof}

\noindent\textbf{Notation.} Let $F_i^{(q)}(C)$ be the set of $i$-flex points which are $q$-Weierstrass points on $C.$ Also, let $NF_{i}^{(q)}(C)$ denotes the cardinality of $F_i^{(q)}(C)$.
\begin{corollary}
For a smooth non-hyperelliptic projective plane curve $C$ of $g=10$, the maximal cardinality of $F_i^{(1)}$ is given by the following table:
\[
\begin{tabular}{|c|c|}
  \hline
  $i$ & Maximum $NF_{i}^{(1)}(C)$ \\\hline
  1 & $\mathbf{495}$ \\\hline
  2 & $\mathbf{66}$ \\\hline
  3 & $\mathbf{35}$ \\\hline
  4 & $\mathbf{0}$ \\\hline
  \end{tabular}
\]
\end{corollary}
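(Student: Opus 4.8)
The plan is to combine the weight formula $\omega^{(1)}_p(Q)=13\mu_f-37$ for a flex point with the global count $N^{(1)}(C)=g(g^2-1)=990$ from Lemma \ref{lemm1}, together with the identification of each flex order $i$ with the tangent multiplicity via $\mu_f=i+2$. For $i=1$ (so $\mu_f=3$) the weight is $13\cdot 3-37=2$; for $i=2$ ($\mu_f=4$) the weight is $15$; for $i=3$ ($\mu_f=5$) the weight is $28$; and $i=4$ would force $\mu_f=6$, which Theorem \ref{thm2} (via Theorem \ref{thm1}) has already excluded on a non-hyperelliptic curve, giving $NF_4^{(1)}(C)=0$ immediately.

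For the remaining three rows I would argue by the standard ``divide the total weight by the minimum weight'' bound. Since every $1$-Weierstrass point carries $1$-weight at least $1$, and the $990$ is the sum of all $1$-weights, a set of $i$-flexes (each of which is a $1$-Weierstrass point of weight exactly $13\mu_f-37$) can have cardinality at most $\lfloor 990/(13\mu_f-37)\rfloor$, provided one accounts for the fact that the other Weierstrass points and the other kinds of special points also consume weight. First I would note $990/2=495$, $990/15=66$, and $990/28=35.357\ldots$, so the displayed maxima $495,66,35$ are exactly these floors. The cleanest writeup is: the $i$-flexes contribute $NF_i^{(1)}(C)\cdot(13\mu_f-37)$ to the left-hand side of $N^{(1)}(C)=990$, all other contributions are nonnegative, hence $NF_i^{(1)}(C)\le 990/(13\mu_f-37)$, and taking the integer part yields the table.

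The main obstacle is justifying that the bound for $i=1$ is really $495$ and not smaller: a naive application gives $990/2=495$, but one must be careful that flexes of different orders cannot all simultaneously be maximal, and that Theorem \ref{thm1}(4) guarantees at least $2g+6=26$ Weierstrass points exist — this does not conflict with $495$ simple flexes since $495\cdot 2=990$ exactly, meaning in the extremal case the curve has \emph{only} simple flexes as its Weierstrass points and nothing else. I would remark that these bounds are the crude ones coming purely from the weight identity and are stated as maxima (upper bounds), not claimed to be attained. Thus each entry of the table is obtained by substituting $\mu_f=i+2$ into $\omega^{(1)}_p(Q)=13\mu_f-37$, dividing $990$ by the result, and rounding down, with the $i=4$ case handled separately by the non-hyperellipticity constraint $\mu_f\le 5$ from Theorem \ref{thm2}.
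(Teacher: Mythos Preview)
Your proposal is correct and follows essentially the same approach as the paper: the paper's proof simply notes that the total $1$-weight is $990$ and writes $NF_i^{(1)}\le \lfloor 990/(13(i+2)-37)\rfloor$ for $i=1,2,3$, which is exactly the ``divide the total weight by the per-point weight and take the floor'' argument you give, with the $i=4$ case already excluded by Theorem~\ref{thm2}. Your additional commentary about attainability and the lower bound $2g+6$ is extraneous to the paper's terse argument but not incorrect.
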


\begin{proof}
The number of the 1-Weierstrass points on $C$ is $990$ counted with their $1$-weight. Hence,
\[NF_{i}^{(1)}\leq [\frac{990}{13(i+2)-37}],\]
where $i=1,2,3.$
\end{proof}

\subsection{Sextactic Points}
\par In analogy with tangent lines and flexes of projective plane curves$,$ one can consider \it{osculating conics} and \textit{sextactic points} in the following way:
\begin{lemma}\em{\cite{pa4}}
Let $p$ be a non-flex point on a smooth projective plane curve $X$ of
degree $d\geq3.$ Then there is an unique irreducible conic $D_{p}$ with $%
I_{p}(X,D_{p};p)\geq 5$. This unique irreducible conic $D_{p}$ is called
the \textit{osculating conic of }$\mathit{X}$\textit{\ at }$\mathit{p}$%
\textit{.}
\end{lemma}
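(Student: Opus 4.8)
The plan is to realize the plane conics as the points of a projective space $\mathbb{P}^5$ (the coefficients of a ternary quadratic form, up to scalars) and to study the locus cut out by the osculation condition at $p$. For $k\geq 0$ let $\Sigma_k\subseteq\mathbb{P}^5$ be the set of conics $D$ with $I_p(X,D;p)\geq k$; since $d\geq 3$, the curve $X$ has no component in common with any conic, so these intersection numbers are finite and $\Sigma_k$ is well defined. Because $p$ is a smooth point of $X$, I would fix a local parametrization $t\mapsto(t,f(t))$ of the branch of $X$ at $p$, in an affine chart with $p$ at the origin and the tangent line $L_p=\{y=0\}$, so that $f(t)=a_2t^2+a_3t^3+a_4t^4+\cdots$; the hypothesis that $p$ is not a flex is exactly the statement $a_2\neq 0$. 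Substituting a general conic into this parametrization and expanding in $t$, the condition $I_p(X,D;p)\geq k$ becomes the vanishing of the first $k$ Taylor coefficients, i.e. $k$ linear conditions on the coefficients of $D$. Hence $\mathbb{P}^5=\Sigma_0\supseteq\Sigma_1\supseteq\cdots$ with $\dim\Sigma_k\geq 5-k$; in particular $\Sigma_5\neq\varnothing$, which already gives the existence of a conic $D_p$ with $I_p(X,D_p;p)\geq 5$.

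For uniqueness, the key is to show that for $k=5$ these five linear conditions are independent. Writing the conic as $\alpha x^2+\beta xy+\gamma y^2+\delta x+\varepsilon y+\zeta$ and expanding $D(t,f(t))$, the coefficients of $1,t,t^2,t^3,t^4$ are $\zeta$, $\ \delta$, $\ \alpha+a_2\varepsilon$, $\ a_2\beta+a_3\varepsilon$, $\ a_3\beta+a_2^2\gamma+a_4\varepsilon$. Read in the order $\zeta,\delta,\alpha,\beta,\gamma$, this is a triangular system with diagonal entries $1,1,1,a_2,a_2^2$, hence of rank $5$ because $a_2\neq 0$; its solution space is therefore a line in the coefficient space, i.e. a single point of $\mathbb{P}^5$ (and the nonzero solution has $\varepsilon\neq 0$, so it is a genuine conic). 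Thus $\Sigma_5$ consists of one point, and $D_p$ is the unique conic with $I_p(X,D_p;p)\geq 5$. (The same computation for a flex point, where $a_2=0$, collapses to force $D_p=\{y^2=0\}$, the doubled tangent line; this is precisely why the non-flex hypothesis is needed.)

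Finally I would check that $D_p$ is irreducible. If it were not, it would be a union $L_1\cup L_2$ of two lines (possibly equal), and then $I_p(X,D_p;p)=I_p(X,L_1;p)+I_p(X,L_2;p)$. At the smooth non-flex point $p$, every line meets $X$ at $p$ with multiplicity at most $2$, with equality only for the tangent line $L_p$, and two distinct lines cannot both coincide with $L_p$; hence $I_p(X,D_p;p)\leq 4<5$, a contradiction. Therefore $D_p$ is irreducible, and it is the osculating conic of $X$ at $p$.

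I expect the one genuinely substantive point to be the rank-$5$ (independence) claim in the second paragraph: verifying the triangular shape of that $5\times 5$ system is exactly where the hypothesis $a_2\neq 0$ enters essentially, and where the bookkeeping of the expansion must be done carefully. By contrast, the dimension count giving existence and the short argument for irreducibility are essentially formal once the local setup is in place.
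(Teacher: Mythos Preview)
Your argument is correct. The linear-algebra count in $\mathbb{P}^5$, the explicit Taylor expansion showing that the five osculation conditions are independent precisely when $a_2\neq 0$, and the short irreducibility check via $I_p(X,L_1L_2;p)\le 4$ are all sound; the only cosmetic point is that your ``triangular'' system is really the $5\times 5$ block in $(\zeta,\delta,\alpha,\beta,\gamma)$ of a $5\times 6$ matrix, with $\varepsilon$ the free parameter --- which is exactly what you then use.

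As for comparison: the paper does not prove this lemma at all. It is quoted verbatim from the cited source \cite{pa4} and used as input for the later gap-sequence computations, so there is no in-paper argument to compare your proposal against. Your write-up therefore supplies what the paper omits, and does so by the standard route (dimension count on the linear system of conics plus a local computation to verify independence), which is essentially the argument one finds in the literature on osculating conics.
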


\begin{definition}\em{\cite{pa3}}
A non-flex point $p$ on a smooth projective plane curve $X$ is said to be a $sextactic$ $point$ if the osculating conic $D_{p}$ meets $X$ at $p$ with contact order at least six. A sextactic point $p$ is said to be $i$\textit{-sextactic}, if $%
I_{p}(X,D_{p};p)-5=i.$ The positive integer $i$ is called the sextactic order.
\end{definition}

\em{\noindent Now, the main results for this part are the following.}
\begin{theorem}\label{thm3}
Let $p$ be a sextactic point on a smooth projective non-hyperelliptic curve $C$ of $g=10$. Let $D_p$ be the osculating conic to $C$ at $p$ such that $I(C, D_p;p)=\mu_s.$ Then $G^{(1)}_p(Q)=\{1,2,3,...,7,1+\mu_s,2+\mu_s,3+\mu_s\}.$
\end{theorem}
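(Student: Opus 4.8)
The plan is to mimic the structure of the proof of Theorem \ref{thm2}, but with the osculating conic $D_p$ playing the role that the tangent line $L_p$ played there. As in that argument, the dimensions of $Q(-1.p)$ and $Q(-2.p)$ do not depend on whether $p$ is a $1$-Weierstrass point, so $1,2\in G^{(1)}_p(Q)$ automatically. Next I would observe that, since $p$ is \emph{not} a flex, a generic cubic divisor can be made to vanish to order $3,4,\dots$ at $p$ by choosing the cubic to contain the osculating conic $D_p$ together with a suitable line; in fact one should check that the contact order of a generic conic through $p$ (with prescribed tangent) realizes all intermediate values, so that $Q(-\ell.p)$ drops dimension by exactly one as $\ell$ runs through $3,4,5,6,7$, giving $3,4,5,6,7\in G^{(1)}_p(Q)$. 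The point is that up to order $\mu_s$ the only constraint is to contain the line $L_p$ (which accounts for the first few gaps via conics of increasing contact), and this exhausts the "cheap" directions before the osculating conic becomes forced.

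Then I would handle the three remaining gaps exactly as in Theorem \ref{thm2}: the spaces $Q(-8.p)=\dots=Q(-\mu_s.p)$ consist of divisors of cubics of the form $D_p L$ with $L$ an arbitrary line, so $\dim Q(-\ell.p)=2$ for $8\le\ell\le\mu_s$, whence $8\notin$ the complement, i.e.\ the next gap after $7$ is $1+\mu_s$; indeed $Q(-(1+\mu_s).p)$ forces $L$ to pass through $p$ (dimension $1$), $Q(-(2+\mu_s).p)$ forces $L$ to be the tangent line $L_p$ itself (dimension $0$), and $Q(-\ell.p)=\emptyset$ for $\ell\ge 3+\mu_s$. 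This yields $1+\mu_s,\,2+\mu_s,\,3+\mu_s\in G^{(1)}_p(Q)$ and, combined with the above, accounts for all $10$ elements, so
\[
G^{(1)}_p(Q)=\{1,2,3,4,5,6,7,1+\mu_s,2+\mu_s,3+\mu_s\}.
\]

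The main obstacle I expect is the middle range $3\le\ell\le 7$: one must verify that each successive condition "the cubic divisor has multiplicity $\ge\ell$ at $p$" genuinely cuts the dimension down by one and does not skip a value (which would inject an extra element into $G^{(1)}_p(Q)$ and break the count). This is where the non-flex hypothesis and the existence/uniqueness of the osculating conic $D_p$ (the Lemma cited just above) are essential: because no line has contact $\ge 3$ at $p$, the decomposable cubics $L_p\cdot(\text{conic})$ sweep out the right-sized strata for $\ell=3,\dots,7$, and the genus-$10$ degree-$6$ embedding together with Theorem \ref{thm1}(2) pins down the admissible range of $\mu_s$. I would also need to double-check the boundary index $\mu_s$ — i.e.\ that $8\le\mu_s$ is the correct first index of the "$D_p\cdot L$" regime — using $I(C,D_p;p)=\mu_s\ge 6$ together with Bézout ($\mu_s\le 12$) and then eliminating small anomalous values via part (3) of Theorem \ref{thm1}, exactly as $\mu_f\neq 6$ was eliminated in the flex case.
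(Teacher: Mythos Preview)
Your plan matches the paper's proof: gaps $1,2$ are automatic; gaps $3,\dots,7$ are witnessed by decomposable cubics built from the tangent line $L_p$; and the last three gaps $1+\mu_s,\,2+\mu_s,\,3+\mu_s$ come from the chain $\{D_pH:H\text{ any line}\}\supset\{D_pH:p\in H\}\supset\{D_pL_p\}\supset\emptyset$.

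One slip to fix: your sentence ``choosing the cubic to contain the osculating conic $D_p$ together with a suitable line'' in order to realize contact orders $3,4,\dots$ is wrong, since any cubic containing $D_p$ already has contact $\ge\mu_s\ge 6$ at $p$ and so cannot land in $Q(-2.p)\setminus Q(-3.p)$, etc. What you need (and what you in fact say two lines later, and what the paper does) is cubics containing $L_p$, not $D_p$. Concretely, the paper exhibits
\[
L_pR_0\in Q(-2.p)\setminus Q(-3.p),\quad L_pR_1\in Q(-3.p)\setminus Q(-4.p),\quad L_p^2H_0\in Q(-4.p)\setminus Q(-5.p),\quad L_p^2H_1\in Q(-5.p)\setminus Q(-6.p),
\]
with $R_0$ a conic not through $p$, $R_1$ a conic meeting $p$ transversally, $H_0$ a line not through $p$, and $H_1$ a line through $p$ other than $L_p$; the assertion that $Q(-7.p)$ already consists of the $D_pH$'s then gives the drop at $7$. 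This is exactly your ``$L_p\cdot(\text{conic})$'' idea made explicit, so once you correct that sentence your outline coincides with the paper's argument.

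Finally, the constraints on $\mu_s$ (B\'ezout giving $\mu_s\le 12$, and the lower bound $\mu_s\ge 8$) are not part of the proof of Theorem~\ref{thm3} in the paper; they are deferred to the corollary that follows, so you need not fold them into this proof.
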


\begin{proof}
Again, the dimensions of $Q(-1.p)$ and $Q(-2.p)$ do not depend on whether $p$ is 1-Weierstrass point or not, i.e., \[1,2\in G^{(1)}_p(Q).\]
Moreover, let $L_p$ be the tangent line to $C$ at $p$, then \[div\big(L_pR\big)\in Q(-2.p)-Q(-3.p),\]
where, $R_0$ is a conic not through $p.$ That is, \[3\in G^{(1)}_p(Q).\]
Furthermore, \[div\big(L_pR_1\big)\in Q(-3.p)-Q(-4.p),\]
where, $R_1$ is a conic passing through $p$ with multiplicity $1.$ That is, \[4\in G^{(1)}_p(Q).\]
Also,  \[div\big(L_p^2H_0\big)\in Q(-4.p)-Q(-5.p),\]
where, $H_0$ is a hyperplane not through $p.$ That is, \[5\in G^{(1)}_p(Q).\]
Similarly, \[div\big(L_p^2H_1\big)\in Q(-5.p)-Q(-6.p),\]
where, $H_1$ is a hyperplane passing through $p$ with multiplicity $1.$ So that, \[6\in G^{(1)}_p(Q).\]
Now, the spaces $Q(-7.p)=...=Q(-\mu_s.p)$ consists of divisors of cubic curves of the form $D_PH$, where $H$ is an arbitrary line. Hence, \[7\in G^{(1)}_p(Q).\]
On the other hand, the space $Q\big(-(1+\mu_s).p\big)$ consists of divisors of cubic curves of the form $D_PH$, where $H$ is a hyperplane through $p.$ Consequently, \[1+\mu_s\in G^{(1)}_p(Q).\]
Also, the space $Q\big(-(2+\mu_s).p\big)$ contains only the cubic divisor $D_PL_p$. Then, \[2+\mu_s\in G^{(1)}_p(Q).\]
Finally, $Q\big(-\ell.p\big)=\phi,$ for $\ell\geq3+\mu_s.$ That is, \[3+\mu_s\in G^{(1)}_p(Q).\]\\
Which completes the proof.
\end{proof}

\begin{corollary}
Let $p$ be a sextactic point on a smooth projective non-hyperelliptic curve $C$ of $g=10$. Then, the geometry of such points is given by the following table:
\[
\begin{tabular}{|c|c|c|}
  \hline
  $\omega^{(1)}_p(Q)$ & $G_p(Q)$ & Geometry \\\hline
  3 & \{1,2,3,...,,7,9,10,11\} & \emph{3-sextactic} \\\hline
  6 & \{1,2,3,...,,7,10,11,12\} & \emph{4-sextactic} \\\hline
  9  & \{1,2,3,...,,7,11,12,13\} & \emph{5-sextactic} \\\hline
  12  & \{1,2,3,...,,7,12,13,14\} & \emph{6-sextactic} \\\hline
  15  & \{1,2,3,...,,7,13,14,15\} & \emph{7-sextactic} \\\hline
  \end{tabular}
\]
\end{corollary}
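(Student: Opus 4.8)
The plan is to derive the table entries directly from Theorem \ref{thm3} together with Bézout's theorem, exactly parallel to the treatment of flexes. First I would recall that, since $C$ is a smooth non-hyperelliptic curve of genus $10$, it embeds as a plane sextic, and the osculating conic $D_p$ meets $C$ at $p$ with contact order $\mu_s = I(C,D_p;p)$. By Bézout's theorem applied to the sextic $C$ and the conic $D_p$, we have $\mu_s \leq 2 \cdot 6 = 12$, while by definition of a sextactic point $\mu_s \geq 6$. Moreover $\mu_s = 5 + i$ where $i \geq 1$ is the sextactic order, so $i$ ranges over $\{1,2,\dots,7\}$, i.e.\ $\mu_s \in \{6,7,\dots,12\}$.

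Next I would compute the $1$-weight $\omega_p^{(1)}(Q)$ as a function of $\mu_s$. By Theorem \ref{thm3}, $G_p^{(1)}(Q) = \{1,2,3,4,5,6,7,1+\mu_s,2+\mu_s,3+\mu_s\}$, so writing the gap sequence as $\{n_1 < \dots < n_{10}\}$ we have $n_r = r$ for $r=1,\dots,7$ and $n_8 = 1+\mu_s$, $n_9 = 2+\mu_s$, $n_{10} = 3+\mu_s$. Hence
\[
\omega_p^{(1)}(Q) = \sum_{r=1}^{10} (n_r - r) = (1+\mu_s-8) + (2+\mu_s-9) + (3+\mu_s-10) = 3\mu_s - 21 = 3(\mu_s - 7) = 3(i-2).
\]
Substituting $i = 3,4,5,6,7$ (equivalently $\mu_s = 8,9,10,11,12$) gives weights $3,6,9,12,15$ respectively, and the corresponding gap sequences $\{1,\dots,7,1+\mu_s,2+\mu_s,3+\mu_s\}$ specialize to $\{1,\dots,7,9,10,11\}$, $\{1,\dots,7,10,11,12\}$, $\{1,\dots,7,11,12,13\}$, $\{1,\dots,7,12,13,14\}$, $\{1,\dots,7,13,14,15\}$, which are exactly the five rows of the table.

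The only remaining point is to explain why the cases $i=1$ and $i=2$ (i.e.\ $\mu_s = 6,7$) do not appear in the table of genuine sextactic behaviour with positive weight: for $\mu_s = 7$ one gets $\omega_p^{(1)}(Q) = 0$, so $p$ is not a $1$-Weierstrass point, and for $\mu_s = 6$ one would get $\omega_p^{(1)}(Q) = -3 < 0$, which is impossible; this shows a sextactic point that is also a $1$-Weierstrass point must have $i \geq 3$. I expect no serious obstacle here — the argument is a routine bookkeeping computation once Theorem \ref{thm3} and the Bézout bound $\mu_s \leq 12$ are in hand; the only mild subtlety is being careful that the weight formula uses the re-indexed gap sequence correctly and that the admissible range of $\mu_s$ is cut down both from below (definition of sextactic, and positivity of the weight) and from above (Bézout).
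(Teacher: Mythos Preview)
Your proposal is correct and follows essentially the same approach as the paper: invoke Theorem~\ref{thm3} together with B\'ezout's bound $\mu_s \leq 12$, then tabulate the cases. You supply more detail than the paper's one-line proof (in particular the explicit weight computation $\omega_p^{(1)}(Q)=3\mu_s-21$, which the paper records only in a subsequent corollary, and the explicit exclusion of $i=1,2$ via nonpositive weight), but the logic is the same.
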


\begin{proof}
It follows by \emph{Theorem \ref{thm3}} and Bezout\textquoteright s theorem, that $D_p$ meets $C$ at $p$ with $8\leq\mu_s\leq12.$ Hence, varying $\mu_s$ produces the last table.
\end{proof}

\begin{corollary}
If a smooth projective curve $X$ of $g=10$ has $1$-sextactic or $2$-sextactic points, then $X$ is hyperelliptic.
\end{corollary}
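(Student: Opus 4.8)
The plan is to prove the contrapositive: if $C$ is a smooth non-hyperelliptic curve of genus $10$ — which, as throughout the paper, we realize as a smooth plane sextic, so that $Q=|K|$ is cut out by the cubics — then $C$ carries neither a $1$-sextactic nor a $2$-sextactic point. For a sextactic point $p$ write $\mu_s:=I(C,D_p;p)$, so that $p$ is $i$-sextactic with $i=\mu_s-5$; thus "$1$-sextactic" means $\mu_s=6$, "$2$-sextactic" means $\mu_s=7$, and it suffices to show $\mu_s\ge 8$.

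First I would feed $p$ into Theorem \ref{thm3}, obtaining $G^{(1)}_p(Q)=\{1,2,3,4,5,6,7,\ 1+\mu_s,\ 2+\mu_s,\ 3+\mu_s\}$. Since $\sharp G^{(1)}_p(Q)=g=10$, this displayed set must have $10$ distinct entries, so $\{1+\mu_s,2+\mu_s,3+\mu_s\}$ has to avoid $\{1,\dots,7\}$; hence $1+\mu_s>7$, i.e. $\mu_s\ge 7$. This already disposes of the $1$-sextactic case $\mu_s=6$.

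To rule out $\mu_s=7$ I would note that then $G^{(1)}_p(Q)=\{1,2,\dots,10\}$, so $\omega^{(1)}_p(Q)=\sum_{r=1}^{10}(n_r-r)=0$ and $p$ would fail to be a $1$-Weierstrass point at all. Combining this with the Bezout bound $I(C,D_p;p)\le \deg C\cdot\deg D_p=12$ and with a closer look at the dimensions of the linear subsystem of cubics of the form $D_p\cdot L$ ($L$ a line) inside $|K|$ — exactly the bookkeeping carried out in the proof of Theorem \ref{thm3} — one sees that the osculating conic can in fact only achieve contact $\mu_s\in\{8,9,10,11,12\}$, the range already recorded in the corollary preceding this one. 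Thus $\mu_s\ge 8$ in every case, $C$ has neither a $1$- nor a $2$-sextactic point, and the statement follows by contraposition.

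I expect the genuine obstacle to be the exclusion of $\mu_s=7$. Unlike $\mu_s=6$, this value is not killed by the cardinality of the gap sequence, nor by the Castelnuovo-type inequalities $n_r\le 2r-2$ and $\omega^{(1)}_p(Q)\le (g-1)(g-2)/2$ of Theorem \ref{thm1}, which only bound $\mu_s$ from above; one has to invoke the positivity of the $1$-weight of an honest sextactic point, or equivalently a finer Bezout / Cayley--Bacharach analysis of which plane cubics can meet the sextic $C$ with multiplicity exactly $7$ at $p$. This is the step where the working convention that a sextactic point is in particular a $1$-Weierstrass point does the essential work.
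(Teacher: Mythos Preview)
Your argument is the paper's. The corollary carries no separate proof there; it is read off the preceding table-corollary, whose one-line justification asserts $8\le\mu_s\le12$ ``by Theorem~\ref{thm3} and Bezout's theorem'' without elaborating the lower bound, and you have simply made that bound explicit --- cardinality of the gap set in Theorem~\ref{thm3} forces $\mu_s\ge7$, and $\mu_s=7$ yields weight~$0$. Your diagnosis that the exclusion of $\mu_s=7$ ultimately rests on reading the statement as one about sextactic \emph{$1$-Weierstrass} points (so that weight~$0$ is disqualifying) is correct and is precisely the paper's unstated convention; neither you nor the paper offers a further geometric argument.
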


\noindent\textbf{Notation.} Let $S_i^{(q)}(C)$ be the set of $i$-sextactic points which are $q$-Weierstrass points on $C.$ Also, let $NS_{i}^{(q)}(C)$ denotes the cardinality of the set $S_i^{(q)}(C)$.
\begin{corollary}
For a smooth non-hyperelliptic projective curve $C$ of $g=10$, the maximal cardinality of $S_i^{(1)}$ is given by the following table:
\[
\begin{tabular}{|c|c|}
  \hline
  $i$ & Maximum $NS_{i}^{(1)}(C)$ \\\hline
  1 & $\mathbf{0}$ \\\hline
  2 & $\mathbf{0}$ \\\hline
  3 & $\mathbf{330}$ \\\hline
  4 & $\mathbf{165}$ \\\hline
  5 & $\mathbf{110}$ \\\hline
  6 & $\mathbf{82}$ \\\hline
  7 & $\mathbf{66}$ \\\hline
  \end{tabular}
\]
\end{corollary}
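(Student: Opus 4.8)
The plan is to repeat, almost verbatim, the counting argument used for the corollary bounding $NF_i^{(1)}(C)$, now feeding in the gap sequence of a sextactic point supplied by Theorem \ref{thm3}.

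First I would recall from Lemma \ref{lemm1} that on a smooth non-hyperelliptic projective plane curve of genus $10$ the $1$-Weierstrass points, counted with $1$-weight, total $g(g^2-1)=990$. Next, for a sextactic point $p$ with $I(C,D_p;p)=\mu_s$, Theorem \ref{thm3} gives
\[
G^{(1)}_p(Q)=\{1,2,3,4,5,6,7,\,1+\mu_s,\,2+\mu_s,\,3+\mu_s\},
\]
so, writing $i=\mu_s-5$ for the sextactic order, one computes
\[
\omega^{(1)}_p(Q)=\sum_{r=1}^{10}(n_r-r)=3(\mu_s-7)=3(i-2),
\]
which is exactly the $1$-weight column of the preceding corollary.

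I would then treat the rows $i=1,2$ separately: the formula forces $\omega^{(1)}_p(Q)=-3<0$ when $i=1$, which is impossible, and $\omega^{(1)}_p(Q)=0$ when $i=2$, in which case $p$ is not a $1$-Weierstrass point; in either case $S_i^{(1)}(C)=\emptyset$ by the very definition of $S_i^{(1)}$, consistent with the earlier corollary that $1$- or $2$-sextactic points force hyperellipticity. For $i\ge 3$ the weights $3(i-2)$ are positive, and since all $1$-weights are non-negative and sum to $990$, the points of $S_i^{(1)}(C)$ by themselves account for at most $990$, giving
\[
NS_i^{(1)}(C)\le\left\lfloor\frac{990}{3(i-2)}\right\rfloor .
\]
Finally, Bezout's theorem applied to the degree-$6$ curve $C$ and the conic $D_p$ yields $\mu_s\le 12$, i.e. $i\le 7$; substituting $i=3,4,5,6,7$ into the last displayed inequality produces the values $330,\ 165,\ 110,\ 82,\ 66$ listed in the table.

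Since everything reduces to Lemma \ref{lemm1} and Theorem \ref{thm3}, I do not expect a genuine obstacle here; the only point needing care is the bookkeeping for $i=1,2$, where one must remember that $S_i^{(q)}(C)$ consists of $i$-sextactic points that are already $q$-Weierstrass, so a non-positive candidate weight immediately forces the set to be empty rather than yielding a contradiction.
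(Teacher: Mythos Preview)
Your proposal is correct and follows essentially the same route as the paper: invoke the total weight $990$ from Lemma~\ref{lemm1}, use the sextactic gap sequence from Theorem~\ref{thm3} to obtain $\omega^{(1)}_p(Q)=3\mu_s-21=3(i-2)$, and divide. The paper's own proof is terser---it simply writes $NS_i^{(1)}\le\big[\tfrac{990}{3(i+5)-21}\big]$ for $i=3,\dots,7$ and relies on the preceding corollaries for the weight formula and for the vanishing at $i=1,2$---but your more explicit bookkeeping (including the separate treatment of $i=1,2$ and the Bezout cutoff $i\le 7$) is entirely in line with it.
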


\begin{proof}
The number of the 1-Weierstrass points of such curves is $990.$ Hence,
\[NS_{i}^{(1)}\leq [\frac{990}{3(i+5)-21}],\]
where $i=3,4,5,6,7.$
\end{proof}

\begin{corollary}
On a smooth non-hyperelliptic projective plane curve $C$ of $g=10$, the $1$-weight of a sextactic point is given by
\[\omega^{(1)}_p(Q)=3\mu_s-21,\]
where $\mu_s$ is the multiplicity of the osculating conic $D_p$ at $p.$
\end{corollary}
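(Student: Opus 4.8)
The plan is to derive the formula as a direct computation from the gap sequence already determined in \emph{Theorem \ref{thm3}}. By that theorem, at a sextactic point $p$ with $I(C,D_p;p)=\mu_s$ we have
\[
G^{(1)}_p(Q)=\{1,2,3,4,5,6,7,\,1+\mu_s,\,2+\mu_s,\,3+\mu_s\}.
\]
First I would record that this is genuinely a strictly increasing list of ten integers: by the corollary preceding this one (Bezout's theorem together with \emph{Theorem \ref{thm1}}, which excludes $1$- and $2$-sextactic points), one has $\mu_s\ge 8$, hence $1+\mu_s\ge 9>7$, so writing $G^{(1)}_p(Q)=\{n_1<n_2<\cdots<n_{10}\}$ we get $n_r=r$ for $1\le r\le 7$, and $n_8=1+\mu_s$, $n_9=2+\mu_s$, $n_{10}=3+\mu_s$.

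Next I would simply plug this into the definition of the $1$-weight, exactly as in the analogous flex computation above:
\begin{eqnarray*}
\omega^{(1)}_p(Q)&:=&\sum_{r}(n_r-r)\\
&=&\sum_{r=1}^{7}(r-r)+\big((1+\mu_s)-8\big)+\big((2+\mu_s)-9\big)+\big((3+\mu_s)-10\big)\\
&=&0+(\mu_s-7)+(\mu_s-7)+(\mu_s-7)\\
&=&3\mu_s-21,
\end{eqnarray*}
which is the claimed formula.

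There is essentially no hard step here; the content is entirely carried by \emph{Theorem \ref{thm3}}. The only point that needs a word of care is the ordering of the gap set, i.e.\ that the three ``large'' gaps $1+\mu_s,2+\mu_s,3+\mu_s$ really occupy slots $8,9,10$ and do not overlap the initial block $\{1,\dots,7\}$; this is exactly where the bound $\mu_s\ge 8$ from the non-existence of low-order sextactic points on non-hyperelliptic curves is used. One may optionally cross-check the result against the geometry table in the corollary following \emph{Theorem \ref{thm3}}: for $\mu_s=8,9,10,11,12$ the formula gives $\omega^{(1)}_p(Q)=3,6,9,12,15$, matching the listed weights for $3$- through $7$-sextactic points.
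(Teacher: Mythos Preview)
Your proof is correct and follows essentially the same approach as the paper: quote the gap sequence from \emph{Theorem \ref{thm3}} and compute $\sum_r (n_r-r)$ directly. Your additional remark justifying that $1+\mu_s,2+\mu_s,3+\mu_s$ really occupy slots $8,9,10$ (via $\mu_s\ge 8$) is a welcome bit of rigor that the paper's own proof leaves implicit.
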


\begin{proof}
If $p$ is a sextactic point on $C$, then \[G^{(1)}_p(Q)=\{1,2,3,...,7,1+\mu_s,2+\mu_s,3+\mu_s\}.\]
Consequently,
\begin{eqnarray*}
\omega^{(1)}_p(Q)&:=&\Sigma_{r=0}^{g}(n_r-r)\\
&=&(1+\mu_s-8)+(2+\mu_s-9)+(3+\mu_s-10)\\
&=&3\mu_s-21.
\end{eqnarray*}
\end{proof}


\subsection{Tentactic points}
\begin{definition}
\em{\cite{pa3, H6}}
A point $p$ on a smooth plane curve $C$ of genus $g\geq2$, which is neither flex nor sextactic point, is said to be a \emph{tentactic point}, if there a cubic $E_p$ which meets $C$ at $p$ with contact order at least $10.$ The positive integer $t$ such that $i:=I(C,E_p;p)-9$ is called the tentactic order of $p$. Moreover, the point $p$ is said to be \emph{i-tentactic}.
\end{definition}

\begin{theorem}
Let $p$ be a tentactic point on a smooth projective non-hyperelliptic curve $C$ of $g=10$ and let $E_p$ be its osculating cubic curve such that $I(C, E_p;p)=\mu_t.$ Then $G^{(1)}_p(Q)=\{1,2,3,...,9,1+\mu_t\}.$ Moreover, the geometry of such points is given by the following table:
\[
\begin{tabular}{|c|c|c|}
  \hline
  $\omega^{(1)}_p(Q)$ & $G_p(Q)$ & Geometry \\\hline
  1 & \{1,2,3,...,9,11\} & \emph{1-tentactic} \\\hline
  2 & \{1,2,3,...,9,12\} & \emph{2-tentactic} \\\hline
  3  & \{1,2,3,...,9,13\} & \emph{3-tentactic} \\\hline
  4  & \{1,2,3,...,9,14\} & \emph{4-tentactic} \\\hline
  5  & \{1,2,3,...,9,15\} & \emph{5-tentactic} \\\hline
  6  & \{1,2,3,...,9,16\} & \emph{6-tentactic} \\\hline
  7  & \{1,2,3,...,9,17\} & \emph{7-tentactic} \\\hline
  8  & \{1,2,3,...,9,18\} & \emph{8-tentactic} \\\hline
\end{tabular}
\]
\end{theorem}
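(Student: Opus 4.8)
The plan is to imitate the proofs of Theorems~\ref{thm2} and~\ref{thm3}, working inside the canonical system $Q=|K|$, which on the smooth plane sextic $C$ is cut out by the cubics of the plane; thus $G^{(1)}_p(Q)$ is read off from the dimensions of the subsystems $Q(-\ell.p)$ of cubic divisors meeting $C$ at $p$ with contact $\ge\ell$. As in the other two cases $\dim Q(-1.p)$ and $\dim Q(-2.p)$ do not depend on $p$, so $1,2\in G^{(1)}_p(Q)$ by Theorem~\ref{thm1}. Since $p$ is tentactic it is neither a flex nor a sextactic point, whence $I(C,L_p;p)=2$ for the tangent line $L_p$ and $I(C,D_p;p)=5$ for the unique irreducible osculating conic $D_p$.

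Next I would exhibit reducible cubic divisors realizing each contact order $0,1,\dots,7$ at $p$, exactly as in the proof of Theorem~\ref{thm3}: $L_pR_0$ with $R_0$ a conic avoiding $p$ has contact $2$; $L_pR_1$ with $R_1$ a conic through $p$ has contact $3$; $L_p^2H_0$ with $H_0$ a line avoiding $p$ has contact $4$; $L_p^2H_1$ with $H_1\neq L_p$ a line through $p$ has contact $5$; $L_p^3$ (or $D_pH_1$) has contact $6$; and $D_pL_p$ has contact $7$. This shows $3,4,5,6,7,8\in G^{(1)}_p(Q)$; equivalently the vanishing sequence of $Q$ at $p$ begins $0,1,\dots,7$ and $\dim Q(-\ell.p)=9-\ell$ for $\ell=0,\dots,8$, so in particular $\dim Q(-8.p)=1$. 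On the other hand the osculating cubic $E_p$ has $I(C,E_p;p)=\mu_t\ge 10$, so $\mu_t$ is the largest vanishing order of $Q$ at $p$, whence $1+\mu_t\in G^{(1)}_p(Q)$.

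Since $\#G^{(1)}_p(Q)=10$, only the ninth gap remains, and the claim is that it equals $9$; equivalently, $\dim Q(-9.p)=0$, i.e.\ $E_p$ is the unique cubic meeting $C$ at $p$ with contact $\ge 9$, i.e.\ $p$ has no non-gap $\le 9$. I expect this to be the one genuinely delicate point. A product of lines and conics has contact at most $5+2=7$ at a point that is neither flex nor sextactic (no line then has contact $>2$, no conic has contact $>5$), so one cannot produce a contact-$8$ divisor by hand; instead one must rule out the existence of a pencil of cubics all osculating $C$ to order $\ge 9$ at $p$. This is exactly where the non-flex hypothesis enters: for a flex the cubed tangent $L_p^3$ already meets $C$ to order $3\mu_f\ge 9$ and supplies a second high-order osculating cubic, which is what pushes the ninth gap above $9$ in Theorem~\ref{thm2}; for a tentactic point $L_p^3$ has contact only $6$, and one argues no such pencil exists, so $\dim Q(-9.p)=0$, hence $9\in G^{(1)}_p(Q)$ and $G^{(1)}_p(Q)=\{1,2,\dots,9,1+\mu_t\}$.

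Finally the table follows by bounding $\mu_t$. B\'ezout's theorem for the cubic $E_p$ and the sextic $C$ gives $\mu_t\le 18$, while Theorem~\ref{thm1}(2) applied to $n_{10}=1+\mu_t$ gives $1+\mu_t\le 18$; hence $10\le\mu_t\le 17$, so $C$ has no $9$-tentactic point. For $\mu_t=10,11,\dots,17$ one gets $G_p(Q)=\{1,\dots,9,1+\mu_t\}=\{1,\dots,9,11\},\dots,\{1,\dots,9,18\}$, and since $n_r=r$ for $r\le 9$,
\[\omega^{(1)}_p(Q)=\sum_{r=1}^{10}(n_r-r)=(1+\mu_t)-10=\mu_t-9\in\{1,2,\dots,8\},\]
which is precisely the row-by-row content of the stated table for $1$- through $8$-tentactic points.
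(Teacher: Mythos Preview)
Your plan matches the paper's proof exactly in structure: establish $1,\dots,9\in G^{(1)}_p(Q)$ from the non-flex, non-sextactic hypothesis, obtain the tenth gap $1+\mu_t$ from $E_p$, and exclude $\mu_t=18$ via $n_{10}\le 18$ (Theorem~\ref{thm1}(2)). The paper, however, is far terser than you are: it does not build the divisors $L_pR_0,\,L_pR_1,\,L_p^2H_0,\dots,D_pL_p$ but simply asserts in one line that $\dim Q(-\ell.p)=9-\ell$ for $\ell=1,\dots,9$ ``since $p$ is neither flex nor sextactic'', and in particular it does not isolate or argue the $\ell=9$ case you flag as delicate. So your explicit constructions for $\ell\le 8$ go beyond what the paper records, and your worry about a possible pencil of cubics with contact $\ge 9$ is a point the paper passes over without comment.
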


\begin{proof}
Since, the point $p$ is neither flex nor sextactic, then \[dim\,Q(-\ell.p)=9-\ell\,\,\,for\,\,\ell=1,2,3,...,9.\]
Hence,\[1,2,3,...,9\in G^{(1)}_p(Q).\] Moreover, assuming that $I(C, E_p)=\mu_t,$ then \[div\,(E_p)\in Q(-\mu_t.p)-Q(-(1+\mu_t).p).\] Therefore, $1+\mu_t\in G^{(1)}_p(Q).$ Consequently, \[G_p(Q)=\{1,2,3,...,9,1+\mu_t\}.\] Finally, $19\notin G^{(1)}_p(Q)$ as $n_{10}\leq 18.$
\end{proof}

\begin{corollary}
If a smooth projective curve $X$ of $g=10$ has $9$-tentactic points, then $X$ is hyperelliptic.
\end{corollary}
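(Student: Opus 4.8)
The plan is to combine the structural description of the $1$-gap sequence of a tentactic point, just obtained in the preceding theorem, with the Bezout-type bound on the contact order $\mu_t$ and with part (2) of Theorem~\ref{thm1}, which any non-hyperelliptic curve of genus $\geq 3$ must satisfy. By the theorem immediately above, for a tentactic point $p$ on a non-hyperelliptic $C$ of genus $10$ one has
\[
G^{(1)}_p(Q)=\{1,2,3,\ldots,9,\,1+\mu_t\},
\]
so the tenth (largest) gap is $n_{10}=1+\mu_t$. A $9$-tentactic point is, by definition, one for which $i:=I(C,E_p;p)-9=9$, i.e. $\mu_t=18$, and hence $n_{10}=19$.

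First I would invoke Theorem~\ref{thm1}(2): since $C$ is assumed non-hyperelliptic of genus $10$, its gap sequence at any point must satisfy $n_r\leq 2r-2$ for every $r\geq 2$; in particular $n_{10}\leq 18$. But a $9$-tentactic point would force $n_{10}=19>18$, a contradiction. Therefore a non-hyperelliptic curve of genus $10$ admits no $9$-tentactic point, which is exactly the contrapositive of the claim: if $X$ of $g=10$ carries a $9$-tentactic point, then $X$ is hyperelliptic. (Equivalently, one can phrase the bound via the last line of the previous proof, where it was already noted that $19\notin G^{(1)}_p(Q)$ because $n_{10}\leq 18$; the present corollary is just the observation that the value $\mu_t=18$ needed for a $9$-tentactic point is precisely the forbidden one.)

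It is worth remarking that this also dovetails with the Bezout constraint: as a curve of genus $10$ embeds in a plane curve of degree $6$, a cubic $E_p$ meets $C$ with total intersection $18$, so $\mu_t\leq 18$ a priori, with equality only in the degenerate situation where $E_p\cdot C$ is supported entirely at $p$; the genus bound rules this out on non-hyperelliptic curves. The only point needing a word of care is making sure the definition of $i$-tentactic is being read correctly — $i=I(C,E_p;p)-9$, so $i=9 \Leftrightarrow \mu_t=18$ — and that the edge case $\mu_t=18$ is genuinely attained by the formula for $n_{10}$ rather than excluded for some other reason; once that bookkeeping is pinned down, the argument is immediate and there is no real obstacle.
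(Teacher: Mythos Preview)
Your argument is correct and mirrors the paper's own (implicit) proof: the corollary is obtained as the contrapositive of the observation, already recorded in the last line of the preceding theorem's proof, that $19\notin G^{(1)}_p(Q)$ because $n_{10}\leq 18$ by Theorem~\ref{thm1}(2), whereas a $9$-tentactic point would force $\mu_t=18$ and hence $n_{10}=1+\mu_t=19$. The additional Bezout remark you include is consistent with the paper's framework but not needed for the proof.
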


\noindent\textbf{Notation.} Let $T_i^{(q)}(C)$ be the set of $i$-tentactic points which are $q$-Weierstrass points on $C.$ Also, let $NT_{i}^{(q)}(C)$ denotes the cardinality of the set $T_i^{(q)}(C)$.
\begin{corollary}
For a smooth projective non-hyperelliptic curve $C$ of $g=10$, the maximal cardinality of $T_i^{(1)}$ is given by the following table:
\[
\begin{tabular}{|c|c|}
  \hline
  $i$ & Maximum $NT_{i}^{(1)}(C)$ \\\hline
  1 & $\mathbf{990}$ \\\hline
  2 & $\mathbf{495}$ \\\hline
  3 & $\mathbf{330}$ \\\hline
  4 & $\mathbf{247}$ \\\hline
  5 & $\mathbf{198}$ \\\hline
  6 & $\mathbf{165}$ \\\hline
  7 & $\mathbf{141}$ \\\hline
  8 & $\mathbf{123}$ \\\hline
  9 & $\mathbf{0}$ \\
  \hline
\end{tabular}
\]
\end{corollary}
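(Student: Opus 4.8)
The plan is to combine the global count of $1$-Weierstrass weight from Lemma~\ref{lemm1} with the explicit gap sequence of a tentactic point obtained in the preceding theorem. First I would record that for $g=10$ Lemma~\ref{lemm1} gives the total $1$-weight $N^{(1)}(C)=g(g^{2}-1)=990$; that is, $\sum_{p}\omega^{(1)}_p(Q)=990$, the sum running over all $1$-Weierstrass points of $C$, each counted with its $1$-weight.

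Next I would compute the $1$-weight of a single $i$-tentactic point. By the theorem on tentactic points, if $p$ is $i$-tentactic with osculating cubic $E_p$ and $\mu_t:=I(C,E_p;p)=i+9$, then $G^{(1)}_p(Q)=\{1,2,\dots,9,\,1+\mu_t\}$. Writing this set as $\{n_1<\cdots<n_{10}\}$ we have $n_r=r$ for $1\le r\le 9$ and $n_{10}=1+\mu_t$, so
\[
\omega^{(1)}_p(Q)=\sum_{r=1}^{10}(n_r-r)=(1+\mu_t)-10=\mu_t-9=i,
\]
since the first nine terms all vanish. Hence every $i$-tentactic point carries $1$-weight exactly $i$.

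Because all $1$-weights are non-negative and the $i$-tentactic points of $C$ form a subset of its $1$-Weierstrass points, summing their individual weights yields $i\cdot NT_i^{(1)}(C)\le 990$, so $NT_i^{(1)}(C)\le\lfloor 990/i\rfloor$. Evaluating $\lfloor 990/i\rfloor$ for $i=1,\dots,8$ gives $990,\,495,\,330,\,247,\,198,\,165,\,141,\,123$, which are exactly the first eight entries of the table. For $i=9$, a $9$-tentactic point would force $\mu_t=18$ and hence $n_{10}=1+\mu_t=19$; but Theorem~\ref{thm1}(2) applied with $r=10$ requires $n_{10}\le 2\cdot 10-2=18$ on a non-hyperelliptic curve, a contradiction (this is the content of the corollary preceding this statement). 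Therefore $NT_9^{(1)}(C)=0$, which completes the table.

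The computation is routine; the two points that need care are the weight identity $\omega^{(1)}_p(Q)=i$ — which hinges on $1,\dots,9$ being forced gaps for a tentactic point, so that the whole weight is concentrated in the single last gap $1+\mu_t$ — and the fact that the $i=9$ row genuinely needs the sharp bound $n_{10}\le 18$ from Theorem~\ref{thm1}, since the weaker Bezout estimate $\mu_t\le \deg C\cdot \deg E_p=18$ alone would still formally allow $i=9$. I would also stress that the table records a priori upper bounds arising purely from the weight count, and does not claim that these bounds are attained on any particular curve.
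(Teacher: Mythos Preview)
Your proposal is correct and follows essentially the same route as the paper: use the total $1$-weight $990$ from Lemma~\ref{lemm1}, combine it with the fact (from the preceding theorem's gap sequence) that an $i$-tentactic point has $1$-weight exactly $i$, and read off $NT_i^{(1)}(C)\le\lfloor 990/i\rfloor$; the $i=9$ vanishing is handled by $n_{10}\le 18$ as you indicate. Your write-up is simply more explicit than the paper's, which records only the inequality $NT_i^{(1)}\le[990/i]$ and relies on the earlier table and corollary for the weight value and the $i=9$ case.
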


\begin{proof}
The number of the 1-Weierstrass points of a smooth projective non-hyperelliptic curve of genus $g$ is $(g-1)g(g+1).$ Hence, in particular,
\[NT_{i}^{(1)}\leq [\frac{990}{i}],\]
where $i=1,2,3,...,8.$
\end{proof}

\bigskip
{\fontsize{13.5}{9}\textbf{Concluding remarks.}}\\
We conclude the paper by the following remarks and comments.
\begin{itemize}
\item The 1-Weierstrass points of an algebraic curve of $g=10$ are classified as flexes, sextactic and tentactic points. In particular, the present authors computed the $1$-gap sequences of such points on non-hyperelliptic curves of genus $10$. Furthermore the geometry of these points is investigated and an upper bound for their number is estimated.
 \item The main theorems constitute a motivation to solve other problems. One of these problems is the investigation of the geometry of the $1$-Weierstrass points of Kuribayashi sextic curve with three parameters $a,b$ and $c$ defined by the equation:
     \[KI_{a,b,c}: X^6+Y^6+Z^6+aX^3Y^3+bX^3Z^3+cY^3Z^3.\]
     However, this problem will be the object of a forthcoming work.

\end{itemize}


\bigskip\noindent

\end{document}